\documentclass{article}

\usepackage[english]{babel}

\usepackage[letterpaper,top=2cm,bottom=2cm,left=3cm,right=3cm,marginparwidth=1.75cm]{geometry}

\usepackage{amsmath}
\usepackage{amssymb}
\usepackage{amsthm}
\usepackage{graphicx}
\usepackage[colorlinks=true, allcolors=blue]{hyperref}

\usepackage{bm}
\usepackage{tikz}
\usepackage[makeroom]{cancel}
\usepackage{makecell}
\usepackage{pdflscape} 

\newcommand{\ssf}[1]{{\scriptstyle\mathsf{#1}}}
\newcommand{\onefour}{\smallint(\ssf{I},\ssf{IV})}
\newcommand{\twothree}{\smallint(\ssf{II},\ssf{III})}
\newcommand{\oneonethree}{\smallint(\ssf{I},\ssf{I},\ssf{III})}
\newcommand{\onetwotwo}{\smallint(\ssf{I},\ssf{II},\ssf{II})}
\newcommand{\onetwothree}{\smallint(\ssf{I},\ssf{II},\ssf{III})}
\newcommand{\twotwotwo}{\smallint(\ssf{II},\ssf{II},\ssf{II})}
\newcommand{\oneoneonetwo}{\smallint(\ssf{I},\ssf{I},\ssf{I},\ssf{II})}
\newcommand{\oneonetwotwo}{\smallint(\ssf{I},\ssf{I},\ssf{II},\ssf{II})}
\newcommand{\onetwoonetwo}{\smallint(\ssf{I},\ssf{II},\ssf{I},\ssf{II})}
\newcommand{\onetwotwotwo}{\smallint(\ssf{I},\ssf{II},\ssf{II},\ssf{II})}
\newcommand{\oneoneoneoneone}{\smallint(\ssf{I},\ssf{I},\ssf{I},\ssf{I},\ssf{I})}
\newcommand{\oneoneoneonetwo}{\smallint(\ssf{I},\ssf{I},\ssf{I},\ssf{I},\ssf{II})}

\theoremstyle{plain}
\newtheorem{theorem}{Theorem}
\newtheorem{lemma}[theorem]{Lemma}

\theoremstyle{definition}

\newtheorem{problem}[theorem]{Problem}

\title{The nerd snipers problem}
\author{Boris Alexeev \and Dustin G.\ Mixon\thanks{Department of Mathematics, The Ohio State University, Columbus, OH} \thanks{Translational Data Analytics Institute, The Ohio State University, Columbus, OH}}
\date{}

\begin{document}
\maketitle

\begin{abstract}
We correct errors that appear throughout ``The vicious neighbour problem'' by Tao and Wu.
\end{abstract}

\section{Introduction}

We seek to solve the following problem.

\begin{problem}
\label{prob.main}
Suppose $N$ nerds are distributed uniformly at random in a square region.
At 3:14pm, every nerd simultaneously snipes their nearest neighbor. 
What is the expected proportion $P_N$ of nerds who are left unscathed in the limit as $N\to\infty$?
\end{problem}

In 1967, Abilock~\cite{Abilock:67} posed a nonasymptotic version of this problem that did not specify the region.
After two decades, Dan Shine submitted (a less colorful version of) Problem~\ref{prob.main} to the magazine \textit{Omni}, which then offered a \$500 prize for its solution~\cite{Morris:86}.
The following year, \textit{Omni} announced~\cite{Morris:87} the prize-winning solution by Tao and Wu~\cite{TaoW:87}, which concluded that a proportion of $0.284051$ are left unscathed.
In 2015, Finch~\cite{Finch:15} attempted to replicate Tao and Wu's tour de force of integration, but after spotting several errors and discrepancies, he concluded~\cite{Finch:19} ``it seems doubtful'' they obtained the correct answer to Problem~\ref{prob.main}.
Sadly, the technical difficulty in Tao and Wu's approach precluded Finch from obtaining an improved estimate.
Around the same time, the problem managed to nerd snipe the authors (\`{a} la xkcd~\cite{Munroe:xkcd}) thanks to a question~\cite{Smith:mse} posed on the Mathematics Stack Exchange, where Winther~\cite{Winther:mse} reports a simulation result of $0.28418(1)$\footnote{\label{iso}Here and throughout, we adopt the ISO standard~\cite{ISO:08} for reporting uncertainty by providing digit(s) in parentheses that reflect the error of the final digit(s) of a point estimate. For example, the statement $P\approx 0.28418(1)$ means $P$ is approximately $0.28418$ with error bars $\pm 1\times 10^{-5}$, while $P\approx 0.28418556313(96)$ means $P$ is approximately $0.28418556313$ with error bars $\pm 96\times 10^{-11}$.}.
This paper represents what is hopefully a remission of our decade-long battle with this mathematical disease.

In the next section, we review how Tao and Wu~\cite{TaoW:87} reduced Problem~\ref{prob.main} to computing a certain integral.
Section~\ref{sec.reducing to multiple integrals} then rigorizes a change of variables that Tao and Wu implemented incorrectly.
The purpose of this change of variables is to make the integral explicit enough for numerical integration.
We report the results of numerical integration in Section~\ref{sec.numerical integration and verification}, and establish that
\[
P
:=\lim_{N\to\infty}P_N
\approx 0.28418556313(96).
\]
In addition, we discuss several approaches we used to verify (portions of) this result.

\section{The approach of Tao and Wu}

By taking a thermodynamic limit, one may instead consider the following model: place a nerd at the origin and distribute infinitely other nerds on the plane according to a Poisson point process with rate $\lambda=1$.
Then the limiting proportion $P$ is precisely the probability that the nerd at the origin is left unscathed.
For each $n\geq 1$, let $c_n$ denote the expected number of size-$n$ sets of points in the point process that snipe the origin.
Then an application of the inclusion--exclusion principle gives
\[
P
= 1 + \sum_{n=1}^\infty (-1)^n c_n.
\]
In order for $n$ generic points to shoot the origin, they must be closer to the origin than to each other; it follows that $c_n=0$ for every $n\geq6$ because of constraints on packing in the plane.
Meanwhile, since every nerd snipes exactly once, every nerd can expect to be sniped exactly once, meaning $c_1=1$.
Overall, 
\[
P
=c_2-c_3+c_4-c_5.
\]
To compute each of the remaining $c_n$s, we consider all configurations of size-$n$ sets of points that are closer to the origin than to each other.
Such a configuration $\{\bm{r}_1,\ldots,\bm{r}_n\}$ will snipe the origin precisely when no other points from the Poisson process are even closer to any of the $\bm{r}_i$s, that is, when the other points manage to avoid all $n$ of the disks $D_i$ centered at $\bm{r}_i$ of radius $|\bm{r}_i|$.
Since our process has rate $\lambda=1$, this occurs with probability 
\[
e^{-V_n(\bm{r}_1,\ldots,\bm{r}_n)},
\]
where $V_n(\bm{r}_1,\ldots,\bm{r}_n)$ denotes the area of the union $\bigcup_{i=1}^n D_i$.
It follows that
\begin{equation}
\label{eq.c_n integral}
c_n
=\frac{1}{n}\int d\bm{r}_1\cdots d\bm{r}_n\,e^{-V_n(\bm{r}_1,\ldots,\bm{r}_n)},
\end{equation}
where the integral is taken over all $n$-tuples $(\bm{r}_1,\ldots,\bm{r}_n)$ of nonzero points that are closer to the origin than to each other and are furthermore labeled in counterclockwise order about the origin.
(The factor of $n$ accounts for how a size-$n$ sniping set determines $n$ different $n$-tuples in counterclockwise order.)

In the next section, we rigorize a change of variables that Tao and Wu used to rewrite \eqref{eq.c_n integral} in a way that leverages the counterclockwise ordering of the $\bm{r}_i$s.
In doing so, we expose an important constraint that was missing from Tao and Wu's treatment.

\section{Reducing to multiple integrals}
\label{sec.reducing to multiple integrals}

The following lemma describes our parametrization of \eqref{eq.c_n integral}.
Reader beware: Our $t_i$ indices differ from the Tao--Wu indices in order to match the $\theta_i$ indices.
Note that our final constraint on $t_n$ was missing in Tao and Wu's original treatment, and this discrepancy will be relevant later.

\begin{lemma}
\label{lem.change of variables}
Let $\mathcal{X}\subseteq\mathbb{R}^{2n}$ denote the set of vectors $(\theta,\theta_1,\theta_2,\ldots,\theta_{n-1},r,t_1,t_2,\ldots,t_{n-1})$ that, together with $t_n:=\frac{1}{t_1\cdots t_{n-1}}$ and $\theta_n:=2\pi-(\theta_1+\cdots+\theta_{n-1})$, satisfy the constraints
\begin{align*}
\theta&\in[0,2\pi),\\
\theta_i&\in(\tfrac{\pi}{3},\tfrac{5\pi}{3}) \text{ for every } i\in[n],\\
r&\in(0,\infty),\\
t_i&\in(0,\infty) \text{ for every } i\in[n], \text{ and }\\
t_i&\in(2\cos\theta_i,\tfrac{1}{2\cos\theta_i}) \text{ for every } i\in[n] \text{ such that } \theta_i\in(\tfrac{\pi}{3},\tfrac{\pi}{2})\cup(\tfrac{3\pi}{2},\tfrac{5\pi}{3}).
\end{align*}
Let $\mathcal{Y}\subseteq(\mathbb{R}^2)^n$ denote the set of $n$-tuples of nonzero points in $\mathbb{R}^2$ that are closer to the origin than to each other and are labeled in counterclockwise order about the origin.
Then the map
\[
F\colon(\theta,\theta_1,\theta_2,\ldots,\theta_{n-1},r,t_1,t_2,\ldots,t_{n-1})
\mapsto
(\bm{r}_1,\ldots,\bm{r}_n)
\]
such that each $\bm{r}_i$ has argument $\theta+\sum_{k=1}^{i-1}\theta_k$ and magnitude $r\prod_{k=1}^{i-1}t_k$ is a bijection $F\colon\mathcal{X}\to\mathcal{Y}$.
\end{lemma}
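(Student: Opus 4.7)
The plan is to exhibit an explicit inverse $G\colon\mathcal{Y}\to\mathcal{X}$: given $(\bm{r}_1,\ldots,\bm{r}_n)\in\mathcal{Y}$, set $\theta:=\arg\bm{r}_1\in[0,2\pi)$ and $r:=|\bm{r}_1|$, and for each $1\le i<n$ let $\theta_i$ be the counterclockwise angle from $\bm{r}_i$ to $\bm{r}_{i+1}$ and $t_i:=|\bm{r}_{i+1}|/|\bm{r}_i|$. The induced $\theta_n=2\pi-\sum_{i<n}\theta_i$ is then the wrap-around counterclockwise gap from $\bm{r}_n$ back to $\bm{r}_1$, and $t_n=1/\prod_{i<n}t_i$ equals $|\bm{r}_1|/|\bm{r}_n|$, so once one checks that $F$ sends $\mathcal{X}$ into $\mathcal{Y}$ and $G$ sends $\mathcal{Y}$ into $\mathcal{X}$, the identities $F\circ G=\mathrm{id}$ and $G\circ F=\mathrm{id}$ will be formal.

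Both domain verifications rest on a single calculation applied cyclically to adjacent pairs. For $\bm{r}_i,\bm{r}_{i+1}$ (indices mod $n$) with counterclockwise angular difference $\theta_i$, the law of cosines gives
\[
|\bm{r}_i-\bm{r}_{i+1}|^2=|\bm{r}_i|^2+|\bm{r}_{i+1}|^2-2|\bm{r}_i||\bm{r}_{i+1}|\cos\theta_i,
\]
so the two inequalities $|\bm{r}_i|<|\bm{r}_i-\bm{r}_{i+1}|$ and $|\bm{r}_{i+1}|<|\bm{r}_i-\bm{r}_{i+1}|$ simplify to $2\cos\theta_i<t_i$ and $2\cos\theta_i<1/t_i$, respectively. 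Both hold automatically when $\cos\theta_i\le 0$; when $\cos\theta_i>0$ they are jointly satisfiable iff $\cos\theta_i<1/2$, and together they carve out the stated open interval for $t_i$. Taking this equivalence over all $i$ simultaneously yields the $\theta_i\in(\tfrac{\pi}{3},\tfrac{5\pi}{3})$ and $t_i$ constraints on $\mathcal{X}$ and certifies the adjacent ``closer-to-origin'' inequalities for $F$; applying it at $i=n$ is precisely where the $t_n$ constraint omitted by Tao and Wu arises, and this is the constraint whose necessity the lemma is designed to expose.

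The main obstacle will be that the constraints cut out by $\mathcal{X}$ only encode adjacent pairs, whereas $\mathcal{Y}$ requires the ``closer-to-origin'' property for all $\binom{n}{2}$ pairs. To close this gap I plan to show that non-adjacent constraints are forced automatically. For non-adjacent $\bm{r}_i,\bm{r}_j$ (vacuous when $n=3$), each of the two arcs separating them is a sum of at least two consecutive $\theta_k$'s, and each $\theta_k$ exceeds $\tfrac{\pi}{3}$, so both arcs exceed $\tfrac{2\pi}{3}$. Consequently the unsigned angle $\alpha\in[0,\pi]$ between $\bm{r}_i$ and $\bm{r}_j$ exceeds $\tfrac{\pi}{2}$ and has $\cos\alpha<0$, at which point the same law-of-cosines calculation renders both non-adjacent inequalities automatic. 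Combined with the counterclockwise ordering (immediate from $\theta_k>0$ together with $\sum\theta_k=2\pi$), this establishes the bijection.
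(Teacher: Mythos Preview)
Your proof is correct and follows essentially the same approach as the paper's: both arguments hinge on the law-of-cosines equivalence $|\bm{r}_i|,|\bm{r}_{i+1}|<|\bm{r}_i-\bm{r}_{i+1}|\iff 2\cos\theta_i<\min\{t_i,t_i^{-1}\}$ for adjacent pairs, and both dispose of the remaining pairs by observing that their interior angle is at least $\tfrac{\pi}{2}$. Your treatment of the non-adjacent case (both arcs exceed $\tfrac{2\pi}{3}$ since each is a sum of at least two $\theta_k>\tfrac{\pi}{3}$) is in fact more explicit than the paper's, which leaves that step largely to the reader.
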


\begin{proof}
It is straightforward to verify that $F$ is injective.
In what follows, we show that $F$ is surjective.

First, we establish $\operatorname{im}(F)\subseteq\mathcal{Y}$.
Since the arguments are strictly increasing in $i$ and reside in $[\theta,\theta+2\pi)$, it follows that the $\bm{r}_i$s are labeled in counterclockwise order about the origin.
Also, each $\bm{r}_i$ has positive magnitude and is thus nonzero.
Next, when distinct $\bm{r}_i$s have interior angle at least $\frac{\pi}{2}$, we have
\[
|\bm{r}_i-\bm{r}_j|^2
\geq|\bm{r}_i|^2+|\bm{r}_j|^2
>\max\big\{|\bm{r}_i|^2,|\bm{r}_j|^2\big\},
\]
meaning they are closer to the origin than to each other.
Since the angle between adjacent $\bm{r}_i$s is greater than $\frac{\pi}{3}$, it remains to consider $\bm{r}_i$ and $\bm{r}_{i+1}$ with $\theta_i\in(\tfrac{\pi}{3},\tfrac{\pi}{2})\cup(\tfrac{3\pi}{2},\tfrac{5\pi}{3})$.
In this case, taking $s_i:=\min\{t_i,t_i^{-1}\}>2\cos\theta_i$ gives
\begin{align*}
|\bm{r}_i-\bm{r}_{i+1}|^2
&=|\bm{r}_i|^2-2|\bm{r}_i||\bm{r}_{i+1}|\cos\theta_i+|\bm{r}_{i+1}|^2\\
&=\max\big\{|\bm{r}_i|^2,|\bm{r}_{i+1}|^2\big\}\cdot\big(s_i\cdot(s_i-2\cos\theta_i)+1 \big)
>\max\big\{|\bm{r}_i|^2,|\bm{r}_{i+1}|^2\big\},
\end{align*}
meaning $\bm{r}_i$ and $\bm{r}_{i+1}$ are closer to the origin than to each other.

To show $\mathcal{Y}\subseteq\operatorname{im}(F)$, fix $(\bm{r}_1,\ldots,\bm{r}_n)\in\mathcal{Y}$.
We let $\theta\in[0,2\pi)$ and $r>0$ denote the argument and magnitude of $\bm{r}_1$, respectively.
Next, for each $i\in[n-1]$, let $\theta_i\in[0,2\pi)$ denote the difference of arguments between $\bm{r}_{i+1}$ and $\bm{r}_i$, and let $t_i>0$ denote the corresponding quotient of magnitudes.
(Note that $\theta_n\in[0,2\pi)$ and $t_n>0$ are then the difference in arguments and quotient of magnitudes, respectively, between $\bm{r}_1$ and $\bm{r}_n$.)
We claim that $(\theta,\theta_1,\theta_2,\ldots,\theta_{n-1},r,t_1,t_2,\ldots,t_{n-1})$ resides in $\mathcal{X}$.
First, we observe that 
\[
\max\big\{|\bm{r}_i|^2,|\bm{r}_{i+1}|^2\big\}
<|\bm{r}_i-\bm{r}_{i+1}|^2
=|\bm{r}_i|^2-2|\bm{r}_i||\bm{r}_{i+1}|\cos\theta_{i}+|\bm{r}_{i+1}|^2,
\]
and rearranging gives
\[
2\cos\theta_i
<\frac{|\bm{r}_i|^2+|\bm{r}_{i+1}|^2-\max\big\{|\bm{r}_i|^2,|\bm{r}_{i+1}|^2\big\}}{|\bm{r}_i||\bm{r}_{i+1}|}
=\min\{t_i,t_i^{-1}\}.
\]
Considering $\min\{t_i,t_i^{-1}\}\leq1$, it follows that $\theta_i>\frac{\pi}{3}$, and taking any $j\neq i$ gives $\theta_i\leq2\pi-\theta_j<\frac{5\pi}{3}$.
Furthermore, if $\theta_i\in(\tfrac{\pi}{3},\tfrac{\pi}{2})\cup(\tfrac{3\pi}{2},\tfrac{5\pi}{3})$, then the above inequality also gives that $t_i\in(2\cos\theta_i,\frac{1}{2\cos\theta_i})$.
Thus, our point is indeed a member of $\mathcal{X}$.
Finally, it is straightforward to verify that $F$ maps this point back to $(\bm{r}_1,\ldots,\bm{r}_n)$, and so $(\bm{r}_1,\ldots,\bm{r}_n)\in\operatorname{im}(F)$.
\end{proof}

To apply the change of variables in Lemma~\ref{lem.change of variables} to the integral~\eqref{eq.c_n integral}, we note that $V_n$ is invariant to $\theta$ and scales quadratically with $r$.
As such, we may write
\[
V_n
=r^2 \, W_n(\theta_1,\ldots,\theta_{n-1},t_1,\ldots,t_{n-1}),
\]
where $W_n$ denotes the area of the appropriate union of disks when $\bm{r}_1$ is a unit vector.
Next, one may verify that the determinant of the Jacobian of the map $F$ in Lemma~\ref{lem.change of variables} has absolute value
\[
r^{2n-1} \, t_1^{2n-3} \, t_2^{2n-5} \, \cdots 
 \, t_{n-2}^3 \, t_{n-1}.
\]
At this point, we can integrate out both $\theta$ and $r$ in general:
\[
\int_0^{2\pi} d\theta \int_0^\infty r^{2n-1} \, dr \, e^{-r^2W_n}
=(n-1)!\,\pi\,W_n^{-n}.
\]
It remains to integrate out each $\theta_i$ and $t_i$.
As we will see, the nature of our integrand and integral bounds depends on which quadrants the $\theta_i$s reside in for $i\in[n]$.
For this reason, we partition the integral by restricting to all feasible tuples of quadrants.
For example, one part of $c_2$ is given by
\[
\onefour
=\frac{1}{2}
\int_{\frac{\pi}{3}}^{\frac{\pi}{2}} d\theta_1 
\int_{2\cos\theta_1}^{\frac{1}{2\cos\theta_1}} t_1 \, dt_1 
\, \pi\,W_2^{-2},
\]
which corresponds to restricting $\theta_1$ to the first quadrant (denoted with the Roman numeral $\ssf{I}$), thereby forcing $\theta_2=2\pi-\theta_1$ to reside in the fourth quadrant (Roman numeral $\ssf{IV}$).
After accounting for all possible combinations of quadrants, we obtain
\begin{equation}
\label{eq.decompose c_2}
c_2
=\onefour+\twothree+\smallint(\ssf{III},\ssf{II})+\smallint(\ssf{IV},\ssf{I})
=2\onefour+2\twothree,
\end{equation}
where the last step applies the fact that $\smallint(\cdot)$ is invariant under cyclic (in fact, dihedral) permutations of the quadrants.
We may similarly decompose the other $c_n$s:
\begin{align}
\label{eq.decompose c_3}
c_3
&=3\oneonethree
+3\onetwotwo
+6\onetwothree
+\twotwotwo,
\\
\label{eq.decompose c_4}
c_4
&=4\oneoneonetwo
+4\oneonetwotwo
+2\onetwoonetwo
+4\onetwotwotwo,
\\
\label{eq.decompose c_5}
c_5
&=\oneoneoneoneone
+5\oneoneoneonetwo.
\end{align}
For the reader comparing with Tao and Wu~\cite{TaoW:87}, we note that
\[
I(0,0)
=2\twothree,
\quad
I(1,0)
=\onefour,
\]
\[
I(0,0,0)
=\twotwotwo,
\quad
I(1,0,0)
=\onetwotwo
+2\onetwothree,
\quad
I(1,1,0)
=\oneonethree,
\]
\[
I(1,0,0,0)
=\onetwotwotwo,
\quad
I(1,1,0,0)
=\oneonetwotwo,
\quad
I(1,0,1,0)
=\onetwoonetwo,
\quad
I(1,1,1,0)
=\oneoneonetwo,
\]
\[
I(1,1,1,1,0)
=\oneoneoneonetwo,
\quad
I(1,1,1,1,1)
=\oneoneoneoneone.
\]

Next, we determine the bounds of integration for each $\smallint$ above.
This will be particularly nontrivial in the case of $\oneoneoneoneone$ thanks to the constraints $\sum_{i=1}^n\theta_i=2\pi$ and $\prod_{i=1}^n t_i=1$, but the following lemma will help.

\begin{lemma}
\label{lem.integration bounds}
Fix $a_1,\ldots,a_n,b_1,\ldots,b_n,c\in\mathbb{R}$ such that $a_k\leq b_k$ for every $k\in[n]$ and consider the set
\[
R
:=\Big\{ ~ x\in\mathbb{R}^n ~ : 
~ 
x_1+\cdots+x_n = c
~~\text{and}~~
x_k\in[a_k,b_k]~\text{for all}~k\in[n]
~
\Big\}.
\]
For each $k\in[n]$, given $x\in\mathbb{R}^n$, denote $x_{<k} := (x_i)_{i<k}$, and given $y\in\mathbb{R}^{k-1}$, put
\[
\ell_k(y)
:= c - \sum_{i<k} y_i - \sum_{i>k} b_i,
\qquad
u_k(y)
:= c - \sum_{i<k} y_i - \sum_{i>k} a_i,
\]
where empty sums are zero by convention.
Then for every $k\in[n]$ and $y\in\mathbb{R}^{k-1}$ for which there exists $z\in R$ such that $z_{<k}=y$, it holds that
\begin{equation}
\label{eq.coordinate projection}
\pi_k\{x\in R:x_{<k}=y\}
=\big[ \ell_k( y ), u_k( y ) \big] \cap [ a_k, b_k ],
\end{equation}
where $\pi_k$ denotes projection onto the $k$th coordinate.
This set is nonempty and equals
\[
\big[ \max\{\ell_k( y ),a_k\}, \min\{u_k( y ), b_k\} \big].
\]
Furthermore,
\begin{align*}
a_k
\leq c-\sum_{i<k}b_i-\sum_{i>k}b_i
\qquad
&\implies
\qquad
\max\{\ell_k( y ),a_k\}=\ell_k( y ),\\
a_k
\geq c-\sum_{i<k}a_i-\sum_{i>k}b_i
\qquad
&\implies
\qquad
\max\{\ell_k( y ),a_k\}=a_k,\\
b_k
\geq c-\sum_{i<k}a_i-\sum_{i>k}a_i
\qquad
&\implies
\qquad
\min\{u_k( y ),b_k\}=u_k( y ),\\
b_k
\leq c-\sum_{i<k}b_i-\sum_{i>k}a_i
\qquad
&\implies
\qquad
\min\{u_k( y ),b_k\}=b_k.
\end{align*}
\end{lemma}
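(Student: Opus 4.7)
The plan is to first establish the identity \eqref{eq.coordinate projection}, and then derive the remaining conclusions from elementary interval arithmetic. Write $P := \pi_k\{x \in R : x_{<k} = y\}$ for the left-hand side of \eqref{eq.coordinate projection}.

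For the inclusion $P \subseteq [\ell_k(y), u_k(y)] \cap [a_k, b_k]$, take any $x \in R$ with $x_{<k} = y$. The membership $x_k \in [a_k, b_k]$ is part of the definition of $R$, and solving $\sum_i x_i = c$ for $x_k$ together with $a_i \leq x_i \leq b_i$ for $i > k$ directly yields $\ell_k(y) \leq x_k \leq u_k(y)$.

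The reverse inclusion is the only substantive step. Given $\xi \in [\ell_k(y), u_k(y)] \cap [a_k, b_k]$, set $T := c - \sum_{i<k} y_i - \xi$; by construction $T \in \bigl[\sum_{i>k} a_i,\, \sum_{i>k} b_i\bigr]$. The plan is to invoke the auxiliary fact that whenever a target value lies in $[\sum_{i \in J} a_i, \sum_{i \in J} b_i]$ for a finite index set $J$, one can choose $x_i \in [a_i, b_i]$ for $i \in J$ summing to that value. The natural proof is by induction on $|J|$: order $J$ and greedily pick the smallest-indexed entry by, say, $x_{\min J} := \max\{a_{\min J},\, T - \sum_{j \in J,\, j > \min J} b_j\}$, check that this choice lies in $[a_{\min J}, b_{\min J}]$ and leaves a residual target in the admissible range, then recurse. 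Applied with $J = \{k+1, \ldots, n\}$ this produces an $x \in R$ whose $k$th coordinate is $\xi$, witnessing $\xi \in P$. This inductive construction is the main (though mild) obstacle; everything else is bookkeeping.

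Nonemptiness of $P$ is handed to us directly by the hypothesis that some $z \in R$ has $z_{<k} = y$, since then $z_k \in P$. The closed-form expression $[\max\{\ell_k(y), a_k\},\, \min\{u_k(y), b_k\}]$ is then just the intersection formula for two overlapping closed intervals. Finally, each of the four implications is a one-line substitution using $a_i \leq y_i \leq b_i$ for $i < k$ (which holds because $y$ comes from some $z \in R$). For the first, $\ell_k(y) = c - \sum_{i<k} y_i - \sum_{i>k} b_i \geq c - \sum_{i<k} b_i - \sum_{i>k} b_i \geq a_k$, so $\max\{\ell_k(y), a_k\} = \ell_k(y)$; the other three are obtained symmetrically by interchanging $a$'s with $b$'s and/or the roles of $\ell_k$ and $u_k$.
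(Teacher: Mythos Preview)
Your proof is correct and follows essentially the same route as the paper's. The only stylistic difference is in establishing \eqref{eq.coordinate projection}: the paper packages the argument via the set identity $\pi_k(S\cap\pi_k^{-1}(T))=\pi_k(S)\cap T$ and then observes that $\pi_k(S)=\{c-\sum_{i<k}y_i-\sum_{i>k}x_i:x_i\in[a_i,b_i]\}=[\ell_k(y),u_k(y)]$ (treating the Minkowski sum of intervals as an interval without further comment), whereas you prove the two inclusions directly and supply an explicit greedy/inductive construction for the reverse one. Both arguments are equivalent in content; the paper's is a bit terser, yours a bit more constructive.
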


\begin{proof}
Fix $k\in[n]$ and $y\in\mathbb{R}^{k-1}$, and consider the sets
\[
S
:=\Big\{ ~ x\in\mathbb{R}^n ~ : 
~ 
x_1+\cdots+x_n = c,
~~~
x_i=y_i~\text{for all}~i<k,
~~\text{and}~
x_i\in[a_i,b_i]~\text{for all}~i>k
~
\Big\},
\quad
T
:=[a_k,b_k].
\]
Then $\{x\in R: x_{<k}=y\}=S\cap \pi_k^{-1}(T)$, and so
\[
\pi_k\{x\in R: x_{<k}=y\}
=\pi_k\big(S\cap \pi_k^{-1}(T)\big)
=\pi_k(S)\cap T.
\]
In addition, 
\[
\pi_k(S)
=\bigg\{ ~c-\sum_{i<k}y_i -\sum_{i>k}x_i~ : 
~
x_i\in[a_i,b_i]~\text{for all}~i>k
~
\bigg\}
=\big[ \ell_k( y ), u_k( y ) \big],
\]
and so \eqref{eq.coordinate projection} follows.
Next, the existence of $z\in R$ such that $z_{<k}=y$ implies that the left-hand set in \eqref{eq.coordinate projection} is nonempty.
In general, a nonempty intersection of intervals $[a,b]$ and $[c,d]$ is given by $[\max\{a,c\},\min\{b,d\}]$.
Finally, the first of the ``furthermore'' statements is established by
\[
a_k
\leq c-\sum_{i<k}b_i-\sum_{i>k}b_i
\leq c-\sum_{i<k}y_i-\sum_{i>k}b_i
= \ell_k(y),
\]
and the other statements have similar proofs.
\end{proof}

Applying Lemma~\ref{lem.integration bounds} to both $\{\theta_i\}_{i=1}^n$ and $\{\log t_i\}_{i=1}^n$, and using the shorthand notation $c_i:=2\cos\theta_i$, we obtain the following integration bounds:
\begingroup
\allowdisplaybreaks
\begin{align*}
\onefour
&=\frac{1}{2}
\int_{\frac{\pi}{3}}^{\frac{\pi}{2}} d\theta_1 
\int_{c_1}^{\frac{1}{c_1}} t_1 \, dt_1 
\, \pi\,W_2^{-2},\\
\twothree
&=\frac{1}{2}
\int_{\frac{\pi}{2}}^{\pi} d\theta_1 
\int_{0}^{\infty} t_1 \, dt_1 
\, \pi\,W_2^{-2},\\
\oneonethree
&=\frac{1}{3}
\int_{\frac{\pi}{3}}^{\frac{\pi}{2}} d\theta_1
\int_{\frac{\pi}{3}}^{\frac{\pi}{2}} d\theta_2 
\int_{c_1}^{\frac{1}{c_1}} t_1^3 \, dt_1
\int_{c_2}^{\frac{1}{c_2}} t_2 \, dt_2 
\, 2\pi\,W_3^{-3},\\
\onetwotwo
&=\frac{1}{3}
\int_{\frac{\pi}{3}}^{\frac{\pi}{2}} d\theta_1 
\int_{\pi-\theta_1}^{\pi} d\theta_2 
\int_{c_1}^{\frac{1}{c_1}} t_1^3 \, dt_1 
\int_{0}^{\infty} t_2 \, dt_2 
\, 2\pi \, W_3^{-3},
\\
\onetwothree
&=\frac{1}{3}
\int_{\frac{\pi}{3}}^{\frac{\pi}{2}} d\theta_1 
\int_{\frac{\pi}{2}}^{\pi-\theta_1} d\theta_2
\int_{c_1}^{\frac{1}{c_1}} t_1^3 \, dt_1 
\int_{0}^{\infty} t_2 \, dt_2 
\, 2\pi \, W_3^{-3},
\\
\twotwotwo
&=\frac{1}{3}
\int_{\frac{\pi}{2}}^{\pi} d\theta_1 
\int_{\frac{\pi}{2}}^{\frac{3\pi}{2}-\theta_1} d\theta_2 
\int_{0}^{\infty} t_1^3 \, dt_1 
\int_{0}^{\infty} t_2 \, dt_2 
\, 2\pi \, W_3^{-3},
\\
\oneoneonetwo
&=\frac{1}{4}
\int_{\frac{\pi}{3}}^{\frac{\pi}{2}} d\theta_1 
\int_{\frac{\pi}{3}}^{\frac{\pi}{2}} d\theta_2 
\int_{\frac{\pi}{3}}^{\frac{\pi}{2}} d\theta_3 
\int_{c_1}^{\frac{1}{c_1}} t_1^5 \, dt_1 
\int_{c_2}^{\frac{1}{c_2}} t_2^3 \, dt_2 
\int_{c_3}^{\frac{1}{c_3}} t_3 \, dt_3 
\, 6\pi \, W_4^{-4},
\\
\oneonetwotwo
&=\frac{1}{4}
\int_{\frac{\pi}{3}}^{\frac{\pi}{2}} d\theta_1 
\int_{\frac{\pi}{3}}^{\frac{\pi}{2}} d\theta_2 
\int_{\frac{\pi}{2}}^{\frac{3\pi}{2}-\theta_1-\theta_2} d\theta_3 
\int_{c_1}^{\frac{1}{c_1}} t_1^5 \, dt_1 
\int_{c_2}^{\frac{1}{c_2}} t_2^3 \, dt_2 
\int_{0}^{\infty} t_3 \, dt_3 
\, 6\pi \, W_4^{-4},
\\
\onetwoonetwo
&=\frac{1}{4}
\int_{\frac{\pi}{3}}^{\frac{\pi}{2}} d\theta_1 
\int_{\frac{\pi}{2}}^{\frac{7\pi}{6}-\theta_1} d\theta_2 
\int_{\frac{\pi}{3}}^{\min\{\frac{\pi}{2},\frac{3\pi}{2}-\theta_1-\theta_2\}} d\theta_3
\int_{c_1}^{\frac{1}{c_1}} t_1^5 \, dt_1 
\int_{0}^{\infty} t_2^3 \, dt_2 
\int_{c_3}^{\frac{1}{c_3}} t_3 \, dt_3 
\, 6\pi \, W_4^{-4},
\\
\onetwotwotwo
&=\frac{1}{4}
\int_{\frac{\pi}{3}}^{\frac{\pi}{2}} d\theta_1 
\int_{\frac{\pi}{2}}^{\pi-\theta_1} d\theta_2 
\int_{\frac{\pi}{2}}^{\frac{3\pi}{2}-\theta_1-\theta_2} d\theta_3 
\int_{c_1}^{\frac{1}{c_1}} t_1^5 \, dt_1 
\int_{0}^{\infty} t_2^3 \, dt_2 
\int_{0}^{\infty} t_3 \, dt_3 
\, 6\pi \, W_4^{-4},
\\
\oneoneoneoneone
&=\frac{1}{5}
\int_{\frac{\pi}{3}}^{\frac{\pi}{2}} d\theta_1 
\int_{\frac{\pi}{3}}^{\frac{\pi}{2}} d\theta_2 
\int_{\frac{\pi}{3}}^{\min\{\frac{\pi}{2},\frac{4\pi}{3}-\theta_1-\theta_2\}} d\theta_3 
\int_{\max\{\frac{\pi}{3},\frac{3\pi}{2}-\theta_1-\theta_2-\theta_3\}}^{\min\{\frac{\pi}{2},\frac{5\pi}{3}-\theta_1-\theta_2-\theta_3\}} d\theta_4 \\
&\qquad\qquad
\int_{\max\{c_1,c_2c_3c_4c_5\}}^{\min\{\frac{1}{c_1},\frac{1}{c_2c_3c_4c_5}\}} t_1^7 \, dt_1 
\int_{\max\{c_2,\frac{c_3c_4c_5}{t_1}\}}^{\min\{\frac{1}{c_2},\frac{1}{t_1c_3c_4c_5}\}} t_2^5 \, dt_2 \\
&\qquad\qquad
\int_{\max\{c_3,\frac{c_4c_5}{t_1t_2}\}}^{\min\{\frac{1}{c_3},\frac{1}{t_1t_2c_4c_5}\}} t_3^3 \, dt_3 
\int_{\max\{c_4,\frac{c_5}{t_1t_2t_3}\}}^{\min\{\frac{1}{c_4},\frac{1}{t_1t_2t_3c_5}\}} t_4 \, dt_4 
\, 24\pi \, W_5^{-5},
\\
\oneoneoneonetwo
&=\frac{1}{5}
\int_{\frac{\pi}{3}}^{\frac{\pi}{2}} d\theta_1 
\int_{\frac{\pi}{3}}^{\frac{5\pi}{6}-\theta_1} d\theta_2 
\int_{\frac{\pi}{3}}^{\frac{7\pi}{6}-\theta_1-\theta_2} d\theta_3 
\int_{\frac{\pi}{3}}^{\frac{3\pi}{2}-\theta_1-\theta_2-\theta_3} d\theta_4 \\
&\qquad\qquad
\int_{c_1}^{\frac{1}{c_1}} t_1^7 \, dt_1 
\int_{c_2}^{\frac{1}{c_2}} t_2^5 \, dt_2 
\int_{c_3}^{\frac{1}{c_3}} t_3^3 \, dt_3 
\int_{c_4}^{\frac{1}{c_4}} t_4 \, dt_4 
\, 24\pi \, W_5^{-5}.
\end{align*}
\endgroup

Note that the above expression for $\oneoneoneoneone$ is much more complicated than the expression for $I(1,1,1,1,1)$ supplied by Tao and Wu~\cite{TaoW:87}.
The reason for this discrepancy stems from a failure to properly account for the constraint $\prod_{i=1}^n t_i=1$, which we avoided by applying Lemma~\ref{lem.integration bounds}.
In fact, we claim that the Tao--Wu region of integration is neither a subset nor a superset of the desired region.
To see one direction of this claim, consider a small neighborhood of the point where $\theta_1=\theta_2=\theta_3=\theta_4=\frac{2\pi}{5}$ and $t_2=t_3=t_4=t_5=\frac{3}{2}$ (in Tao--Wu indexing).
While the Tao--Wu region of integration contains this neighborhood, every configuration in this neighborhood has the property that the fifth point fails to snipe the origin.
For the other direction, consider a small neighborhood of the point where $\theta_1=\theta_2=\theta_3=\theta_4=\frac{11\pi}{28}$, $t_2=t_3=t_4=\frac{3}{2}$, and $t_5=\frac{1}{2}$ (again, in Tao--Wu indexing).
While this avoids the Tao--Wu region of integration, every configuration in this neighborhood has the property that all five points snipe the origin.
See Figure~\ref{fig.certificate configurations} for an illustration of these configurations.

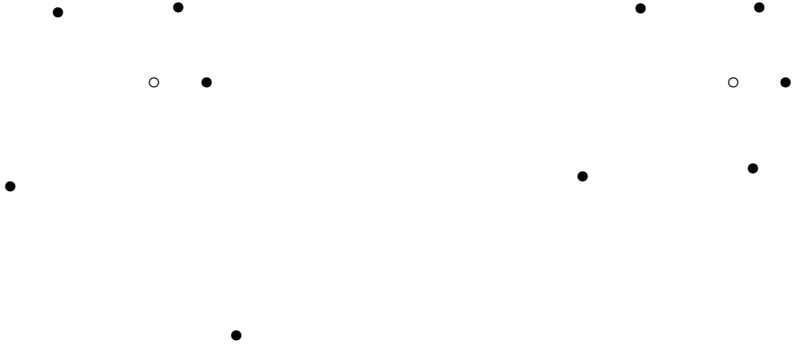
\begin{figure}
\begin{center}
\begin{tikzpicture}[scale=0.7]
\draw (0,0) node {$\circ$};
\draw (1,0) node {$\bullet$};
\draw ({3/2*cos(deg(2*pi/5))},{3/2*sin(deg(2*pi/5))}) node {$\bullet$};
\draw ({(3/2)^2*cos(deg(2*2*pi/5))},{(3/2)^2*sin(deg(2*2*pi/5))}) node {$\bullet$};
\draw ({(3/2)^3*cos(deg(3*2*pi/5))},{(3/2)^3*sin(deg(3*2*pi/5))}) node {$\bullet$};
\draw ({(3/2)^4*cos(deg(4*2*pi/5))},{(3/2)^4*sin(deg(4*2*pi/5))}) node {$\bullet$};
\draw (11+0,0) node {$\circ$};
\draw (11+1,0) node {$\bullet$};
\draw ({11+3/2*cos(deg(11*pi/28))},{3/2*sin(deg(11*pi/28))}) node {$\bullet$};
\draw ({11+(3/2)^2*cos(deg(2*11*pi/28))},{(3/2)^2*sin(deg(2*11*pi/28))}) node {$\bullet$};
\draw ({11+(3/2)^3*cos(deg(3*11*pi/28))},{(3/2)^3*sin(deg(3*11*pi/28))}) node {$\bullet$};
\draw ({11+1/2*(3/2)^3*cos(deg(4*11*pi/28))},{1/2*(3/2)^3*sin(deg(4*11*pi/28))}) node {$\bullet$};
\end{tikzpicture}
\end{center}
\caption{\label{fig.certificate configurations}
Two configurations that violate the Tao--Wu region of integration for $I(1,1,1,1,1)$.
In each illustration above, the white dot denotes the origin, while the black dots denote nearby snipers.
The left-hand configuration resides in the Tao--Wu region of integration, but the bottom-right point fails to snipe the origin.
Meanwhile, the right-hand configuration does not reside in the Tao--Wu region of integration even though every point snipes the origin.
We avoided these errors by applying Lemma~\ref{lem.integration bounds} when deriving our expression for $\oneoneoneoneone$.}
\end{figure}

We conclude this section by deriving a closed-form expression for each $W_n$.
Here, we follow the approach of Tao and Wu~\cite{TaoW:87}, but we avoid certain errors that appear in their original treatment by carefully stating $W_n$ in all cases.
For $n=2$, suppose $\bm{r}_1$ has unit norm, $\bm{r}_2$ has norm $t$, and their interior angle is $\theta\in(0,\pi)$.
Consider the following triangle:
\[
\begin{tikzpicture}
\draw (0,0) -- (2,0) -- (1,1.732) -- cycle;
\draw (0.3,0.2) node {$\theta$};
\draw (1.7,0.15) node {$\alpha$};
\draw (0.97,1.35) node {$\beta$};
\draw (1,-0.25) node {$1$};
\draw (0.5-0.2,0.866+0.1) node {$t$};
\end{tikzpicture}
\]
Then the union $D_1\cup D_2$ can be partitioned into a section of $D_1$ with angle $2\pi-2\alpha$, a section of $D_2$ with angle $2\pi-2\beta$, a triangle in $D_1$ with side lengths $(1,1,2\sin\alpha)$, and a triangle in $D_2$ with side lengths $(t,t,2t\sin\beta)$.
Returning to our indexed integration variables, the total area is then given by
\[
W_2(\theta_1,t_1)
=(\pi-\alpha_1)+(\pi-\beta_1)t_1^2+\cos\alpha_1\sin\alpha_1+t_1^2\cos\beta_1\sin\beta_1,
\]
where here and throughout, we take
\[
\alpha_i
:=\arcsin\Big(\tfrac{t_i\sin\theta'_i}{\sqrt{1+t_i^2-2t_i\cos\theta'_i}}\Big),
\qquad
\beta_i
:=\arcsin\Big(\tfrac{\sin\theta'_i}{\sqrt{1+t_i^2-2t_i\cos\theta'_i}}\Big),
\qquad
\theta'_i
:=\min\{\theta_i,2\pi-\theta_i\}.
\]
(While we correctly distinguish between $\theta_i$ and $\theta'_i$ in the above, we note that for all of the integrals we compute, the bounds of integration ensure that $\theta'_i=\theta_i$ whenever $\alpha_i$ or $\beta_i$ appears in the integrand.) 
For $n=3$, if $\theta_1,\theta_2,\theta_3\in(0,\pi)$, then the union $D_1\cup D_2\cup D_3$ can be similarly partitioned into three sections and six triangles to obtain
\begin{align}
\nonumber
W_3(\theta_1,\theta_2,t_1,t_2)
&=(\pi-\alpha_1-\beta_3)+(\pi-\alpha_2-\beta_1)t_1^2+(\pi-\alpha_3-\beta_2)(t_1t_2)^2\\
\nonumber
&\qquad
+\cos\alpha_1\sin\alpha_1+t_1^2\cos\beta_1\sin\beta_1+t_1^2\cos\alpha_2\sin\alpha_2\\
\label{eq.n=3 with all angles interior}
&\qquad
+(t_1t_2)^2\cos\beta_2\sin\beta_2+(t_1t_2)^2\cos\alpha_3\sin\alpha_3+\cos\beta_3\sin\beta_3.
\end{align}
Considering \eqref{eq.decompose c_3}, the only other relevant possibility for $n=3$ is when $\theta_1,\theta_2\in(0,\pi)$ and $\theta_3\in(\pi,2\pi)$, in which case $D_1\cup D_2\cup D_3$ partitions into three sections and four triangles so that
\begin{align*}
W_3(\theta_1,\theta_2,t_1,t_2)
&=(\pi-\alpha_1)+(\pi-\alpha_2-\beta_1)t_1^2+(\pi-\beta_2)(t_1t_2)^2\\
&\qquad
+\cos\alpha_1\sin\alpha_1+t_1^2\cos\beta_1\sin\beta_1+t_1^2\cos\alpha_2\sin\alpha_2+(t_1t_2)^2\cos\beta_2\sin\beta_2.
\end{align*}
For $n\in\{4,5\}$, equations \eqref{eq.decompose c_4} and \eqref{eq.decompose c_5} allow us to assume $\theta_i\in(0,\pi)$ for all $i$, in which case the union $\bigcup_{i=1}^n D_i$ consists of $n$ sections and $2n$ triangles whose areas satisfy a generalization of \eqref{eq.n=3 with all angles interior}:
\[
W_n
=\sum_{i=1}^n \Big(\pi-\alpha_i-\beta_{i-1}+\cos\alpha_i\sin\alpha_i+\cos\beta_{i-1}\sin\beta_{i-1}\Big)\bigg(\prod_{k=1}^{i-1}t_k\bigg)^2,
\]
where the index of $\beta_{i-1}$ should be interpreted modulo $n$.

\section{Numerical integration and verification}
\label{sec.numerical integration and verification}

In this section, we estimate $P$ by numerically integrating the integrals we set up in the previous section, and we verify our computations by various means.
See Tables~\ref{table.p's} through~\ref{table.small ints} for a summary.

\begin{table}
\caption{\label{table.p's} Various estimates of $P$}
\begin{center}
\begin{tabular}{ll}
source & $P$ \\ \hline
Tao--Wu~\cite{TaoW:87}
& 0.284051
\\
Winther~\cite{Winther:mse}
& 0.28418(1)
\\
Monte Carlo integration
& 0.2841817(62)
\\
Monte Carlo simulation
& 0.28418587(20)
\\
numerical integration
& 0.28418556313(96)
\end{tabular}
\end{center}
\end{table}

\subsection{Numerical integration}

Our general approach was to use Mathematica's built-in \texttt{NIntegrate} function with precision goal set to $20$, and we increased the accuracy goal until the resulting runtime became a limiting factor.  (The high precision goal implies that the accuracy goal will be the constraining factor.)
To avoid long runtimes, we found that three tricks led to computational speedups.
First, it helped to split up our integrals at $t_i=1$. This splitting alone is a built-in feature of \texttt{NIntegrate}; for example, one can implement numerical integration over $t_1$ from $\frac{1}{2}$ to $2$ with a split at $1$ by replacing \verb|{t1,1/2,2}| with \verb|{t1,1/2,1,2}|.
However, we split the integrals into separate sub-integrals because after splitting at $1$, half of the sub-integral $t_i$~bounds end up over $(1,\frac{1}{c_i})$ or $(1,\infty)$, and we manually changed variables $u_i:=\frac{1}{t_i}$ in these cases to achieve a shorter interval of integration.
Finally, we observed that runtime suffers whenever one of the bounds of integration includes a~$\max$ or $\min$.
To mitigate this, we split up some of these integrals with the help of \textit{cylindrical algebraic decomposition}~\cite{Collins:75}, as implemented in Mathematica.
For example, for $\onetwoonetwo$, we decomposed
\begin{align*}
\int_{\frac{\pi}{3}}^{\frac{\pi}{2}} d\theta_1 
~\int_{\frac{\pi}{2}}^{\frac{7\pi}{6}-\theta_1} d\theta_2 
~\int_{\frac{\pi}{3}}^{\min\{\frac{\pi}{2},\frac{3\pi}{2}-\theta_1-\theta_2\}} d\theta_3
\quad
&=\quad
\int_{\frac{\pi}{3}}^{\frac{\pi}{2}} d\theta_1 
~\int_{\frac{\pi}{2}}^{\pi-\theta_1} d\theta_2 
~\int_{\frac{\pi}{3}}^{\frac{\pi}{2}} d\theta_3\\
&\quad
+
\int_{\frac{\pi}{3}}^{\frac{\pi}{2}} d\theta_1 
\int_{\pi-\theta_1}^{\frac{7\pi}{6}-\theta_1} d\theta_2 
\int_{\frac{\pi}{3}}^{\frac{3\pi}{2}-\theta_1-\theta_2} d\theta_3,
\end{align*}
which appeared to improve computational efficiency; it was not as useful to similarly decompose the bounds on $\theta$ in $\oneoneoneoneone$ into three parts.
Whenever we split up an integral, we estimated the worst-case total error by the triangle inequality; that is, the error bounds we report in the tables are the sum of the accuracy goals for all of the corresponding sub-integrals.

Nota bene, \texttt{NIntegrate} is not perfect, and in particular, we highlight the following disclaimer from Mathematica's documentation:
\begin{quote}
You should realize that with sufficiently pathological functions, the algorithms used by \texttt{NIntegrate} can give wrong answers. 
In most cases, you can test the answer by looking at its sensitivity to changes in the setting of options for \texttt{NIntegrate}.
\end{quote}
We took the above suggestion to heart by comparing results from a wide variety of options, but even so, we cannot be 100 percent confident that the true value of each $c_n$ and $P$ resides within the error bars we report.

\subsection{Verification}

Since the integral bounds for each $\smallint$ are so complicated, it is important to test their correctness.
To this end, we systematically searched for counterexamples like those in Figure~\ref{fig.certificate configurations}.
We randomly sampled points within each of our integral bounds and verified that they determine an appropriate geometry of points that snipe the origin, and conversely, we randomly sampled points that snipe the origin and verified that they satisfy the appropriate integral bounds.
We did not find any counterexamples to our stated bounds of integration (but the random searches quickly found counterexamples in both directions for Tao and Wu's bounds for $\oneoneoneoneone$).
Next, we note that some of the integral bounds in Tao and Wu degenerate.
For example, $I(1,1,1,1,0)$ integrates $\theta_4$ from $\min\{\frac{\pi}{3},\frac{3\pi}{2}-\theta_1-\theta_2-\theta_3\}$ to $\frac{3\pi}{2}-\theta_1-\theta_2-\theta_3$, which is an interval of length zero whenever $\theta_1+\theta_2+\theta_3\geq\frac{7\pi}{6}$.
We experienced issues with numerically integrating over bounds that degenerate in this way, and so we verified that our integral bounds never degenerate. 
In particular, we used cylindrical algebraic decomposition to check this algebraically, and then we further tested this by randomly sampling points within each of the integral bounds and confirming that downstream intervals always had positive length.

Next, we acknowledge that the piecewise-defined expression for $W_n$ is a bit tricky, and so we also wanted to test its correctness.
Thankfully, Mathematica has built-in functions that allow one to compute the area of a union of disks.
For example,
\begin{quote}
\begin{verbatim}
Area[ RegionUnion[ Disk[{1,0},1], Disk[{0,2},2] ] ]
\end{verbatim}
\end{quote}
returns the area of a union of two disks that matches our expression for $W_2(\frac{\pi}{2},2)$.
We randomly selected points that satisfy each of our integration bounds, and at each point, we verified that $W_n$ matches the area given by Mathematica's built-in functions to within a reasonable computational error.

Finally, it turns out that $c_2$ is so easy to compute that we were able to use two different methods to verify its computation.
First, following an idea of Finch~\cite{Finch:15}, we computed $c_2$ using
\begin{quote}
\begin{verbatim}
2 * Pi/2 * NIntegrate[
  Boole[ 1^2+0^2 <= (1-x2)^2+(0-y2)^2 && x2^2+y2^2 <= (1-x2)^2+(0-y2)^2 ] * 
  Area[ RegionUnion[ Disk[{1,0},1], Disk[{x2,y2},Sqrt[x2^2+y2^2]] ] ]^(-2), 
  {x2,-Infinity,Infinity}, {y2,0,Infinity},
  AccuracyGoal->12, PrecisionGoal->12, WorkingPrecision->20]
\end{verbatim}
\end{quote}
Second, Henze~\cite{Henze:87} derived a different integral expression for $c_2$ in a more general setting (i.e., in which snipers are randomly distributed in $\mathbb{R}^d$), and we numerically integrated this expression with $d=2$.
Both of these independent computations matched our estimate of $c_2$.
In effect, this verifies various aspects of our integrals (such as the Jacobian) that we did not verify by other means.

\subsection{Comparison}

As one can see from the tables, the values we compute for the various integrals differ from those provided by Tao--Wu~\cite{TaoW:87}.
Generally, Tao--Wu provide an accuracy of $7$ to $9$ digits after the decimal place (including zeros); despite the apparent precision, we find that the number of correct leading nonzero digits ranges from~$0$ (in the case of~$c_5$) to~$4$ (in the case of~$I(1,0)$).

Tao--Wu also made some arithmetic errors, as can be seen in these equations from their text:
\begin{subequations}
\begin{align}
c_3
&= I(0,0,0) + 3I(1,0,0) + 3I(1,1,0),\tag{Tao--Wu 16}\\
I(0,0,0)
&=0.011207724\ldots\tag{Tao--Wu 25}\\
I(1,0,0)
&=0.005621972\ldots\tag{Tao--Wu 26}\\
I(1,1,0)
&=0.001168842\ldots\tag{Tao--Wu 27}\\
c_3
&=0.0329390\ldots.\tag{Tao--Wu 30}
\end{align}
\end{subequations}
These values are not consistent:
substitution of their (25)--(27) into their (16) gives $c_3\approx 0.031580166$. 
The computation would be nearly correct if $I(0,0,0)\approx 0.011007724$ (perhaps miscopying ``two zeros'' as ``two zero'') and $I(1,1,0)\approx 0.00168842$ (mind the missing digit), but we still cannot entirely account for the discrepancy.
We do not attempt to correct this error in our tables in part because doing so makes the Tao--Wu estimates even worse.  For example, treating their (25)--(27) as correct leads to a downstream estimate of $P\approx 0.285410$.

As mentioned in the introduction, Winther~\cite{Winther:mse} reports a Monte Carlo simulation result of $0.28418(1)$ in an answer on the Mathematics Stack Exchange.  This simulation was faithful to the original problem description: a large number of points were sampled directly from the unit square $[0,1]^2$ and nearest neighbors were computed using an approach that is more efficient than brute-force search.  Winther's quoted error estimate of $10^{-5}$ is $3\sigma$ ($99.7\%$~confidence) of the standard error, including some consideration paid to the contribution of boundary effects.

We also performed a Monte Carlo simulation, but already in the limiting domain.  Assuming the origin is one of the nerds in the plane, we sample the other nerds according to a Poisson point process with constant rate.  Specifically, we sample the remaining points in increasing order of their distance from the origin.  This allows us to cut off the simulation after a finite number of points have been selected, since further points no longer affect who shoots the origin.  We sampled over $2\cdot 10^{11}$ configurations to estimate the values $c_n$ and the various $\smallint$~integrals.

For the purposes of estimating $P$, it is sometimes possible to sample even more efficiently.  Indeed, after sampling some number of points from the Poisson process in order of increasing radius, it is sometimes already known that the origin will be shot, but not yet known exactly how many times.  By cutting off such samples early, we were able to sample over $5\cdot 10^{12}$ configurations and estimate the value of $P$ more precisely.  For all of these Monte Carlo simulation estimates, we quote an error estimate of $1\sigma$ ($68\%$~confidence) in accordance with standard error reporting practice.

As an additional check on our numerical integration estimates, we also performed Monte Carlo \emph{integration}.  (The reasoning here is that Monte Carlo integration is relatively straightforward from a programming perspective, and compared to other numerical integration techniques, it is not as sensitive to the behavior of the integrand.)  We again quote an error estimate of $1\sigma$, combining estimates from different pieces by assuming they are independent and normally distributed.

Our numerical integration is consistent with the various Monte Carlo estimates.  The largest difference is between the numerical integration and the Monte Carlo simulation of $\twotwotwo$, which is $2.49$ standard deviations away.  Note that we report $38$ Monte Carlo estimates for various values, so one would reasonably expect some amount of spread in the estimates as a result.

\section*{Acknowledgments}

DGM was supported in part by NSF DMS 2220304.

\raggedright

\begin{landscape}

\begin{table}
\caption{\label{table.c_n's} Various estimates of $c_n$s}
\begin{center}
\begin{tabular}{lllll}
source & $c_2$ & $c_3$ & $c_4$ & $c_5$ \\ \hline
Tao--Wu~\cite{TaoW:87} 
& 0.3163335
& 0.0329390
& 0.0006575
& 0.0000010
\\
Finch~\cite{Finch:19}
& 0.316585
& 0.033056
& $-$
& $-$
\\
Monte Carlo integration
& 0.3165821(57)
& 0.0330571(25)
& 0.00065702(18)
& 0.0000002038025(94)
\\
Monte Carlo simulation
& 0.3165833(13)
& 0.03305604(40)
& 0.000657115(52)
& 0.00000020460(92)
\\
numerical integration
& 0.3165850647281(20)
& 0.0330563647606(88)
& 0.00065706696(46)
& 0.00000020380(48)
\end{tabular}
\end{center}
\end{table}

\begin{table}
\caption{\label{table.small ints} Various estimates of $\smallint$s and $I$s}
\begin{center}
\begin{tabular}{llllll}
source & \footnotesize{$\onefour=I(1,0)$} & \footnotesize{$\twothree=\frac{1}{2}I(0,0)$} & \footnotesize{$I(0,0)=2\twothree$} 
\\ \cline{1-4}
Tao--Wu~\cite{TaoW:87} 
&0.028880062
&0.129286084
&0.258572168
\\
Monte Carlo integration
& 0.0288809(20)
& 0.1294101(20)
& 0.2588203(40)
\\
Monte Carlo simulation
& 0.02888152(25)
& 0.12941012(56)
& 0.2588202(11)
\\
numerical integration
& 0.0288814929604(10)
& 0.1294110394036666(10)
& 0.2588220788073332(20)
\\
\\
source & \footnotesize{$\oneonethree=I(1,1,0)$} & \footnotesize{$\onetwotwo$} & \footnotesize{$\onetwothree$} & \footnotesize{$\twotwotwo=I(0,0,0)$} &
\footnotesize{$I(1,0,0)=\onetwotwo+2\onetwothree$}
\\ \hline
Tao--Wu~\cite{TaoW:87} 
&0.001168842
&--
&--
&0.011207724
&0.005621972
\\
Monte Carlo integration
& 0.00117493(32)
& 0.00448895(76)
& 0.000630599(19)
& 0.01228190(44)
& 0.00575015(76)
\\
Monte Carlo simulation
& 0.001174887(43)
& 0.004489011(83)
& 0.000630586(23)
& 0.01228098(22)
& 0.005750133(94)
\\
numerical integration
& 0.00117490461633(40)
& 0.00448886036115(40)
& 0.00063058779302(40)
& 0.0122815430701(40)
& 0.0057500359472(12)
\\
\\
source & \footnotesize{$\oneoneonetwo=I(1,1,1,0)$} & \footnotesize{$\oneonetwotwo=I(1,1,0,0)$} & \footnotesize{$\onetwoonetwo=I(1,0,1,0)$} & \footnotesize{$\onetwotwotwo=I(1,0,0,0)$} \\ \cline{1-5}
Monte Carlo integration
& 0.000057108(43)
& 0.0000640379(66)
& 0.000060504(13)
& 0.00001285586(73)
\\
Monte Carlo simulation
& 0.0000571294(77)
& 0.0000640431(81)
& 0.000060511(11)
& 0.0000128506(36)
\\
numerical integration
& 0.000057122200(80)
& 0.0000640437671(80)
& 0.000060491237(40)
& 0.0000128551537(80)
\\
\\
source & \footnotesize{$\oneoneoneonetwo=I(1,1,1,1,0)$} & \footnotesize{$\oneoneoneoneone=I(1,1,1,1,1)$} \\ \cline{1-3}
Monte Carlo integration
& 0.0000001904612(94)
& 0.00000000266825(17)
\\
Monte Carlo simulation
& 0.00000019142(89)
& 0.000000002635(47)
\\
numerical integration
& 0.00000019046(48)
& 0.00000000266817(16)
\end{tabular}
\end{center}
\end{table}

\end{landscape}

\end{document}